\theoremstyle{plain}%
 \newtheorem{theorem}{Theorem}
\theoremstyle{remark}
\newtheorem{remark}{Remark}
\theoremstyle{definition}
\newtheorem{definition}{Definition}
\newtheorem{example}{Example}
\begin{document}
 
\begin{center}

 A generalization of immanants based on partition algebra characters

 \ 

John M.\ Campbell

\end{center}

\begin{abstract}
We introduce a generalization of immanants of matrices, using partition algebra characters in place of symmetric group characters. We prove that our 
 immanant-like function on square matrices, which we refer to as the \emph{recombinant}, agrees with the usual definition for immanants for the special 
 case whereby the vacillating tableaux associated with the irreducible characters correspond, according to the Bratteli diagram for partition algebra 
 representations, to the integer partition shapes for symmetric group characters. In contrast to previously studied variants and generalizations of 
 immanants, as in Temperley--Lieb immanants and $f$-immanants, the sum that we use to define recombinants is indexed by a full set of partition 
 diagrams, as opposed to permutations.
\end{abstract}

\noindent  {\footnotesize 2020 Mathematics Subject Classification: 05E10, 15A15}

\noindent  {\footnotesize immanant, partition algebra, character, irreducible representation}

\section{Introduction}
 The concept of the \emph{immanant} of a matrix was introduced in a seminal 1934 article by Littlewood and Richardson \cite{LittlewoodRichardson1934}. 
 As suggested by Littlewood and Richardson \cite{LittlewoodRichardson1934}, by generalizing determinants and permanents of matrices using symmetric 
 group characters, this provides a way of unifying disparate areas of combinatorial analysis, linear algebra, and representation theory. Since partition 
 algebras are such natural extensions of symmetric group algebras \cite{HalversonRam2005}, this leads us to consider how immanants of matrices may be 
 generalized using partition algebra characters. This forms the main purpose of our article, in which we introduce the concept of the \emph{recombinant} of 
 a matrix. This gives us a generalization of immanants that is separate from the concept of an $f$-immanant. 

 Given an $n \times n$ matrix 
\begin{equation}\label{generalnxn}
 A = \left( a_{i, j} \right)_{n \times n} = \left( \begin{matrix} 
 a_{1, 1} & a_{1, 2} & \cdots & a_{1, n} \\ 
 a_{2, 1} & a_{2, 2} & \cdots & a_{2, n} \\ 
 \vdots & \vdots & \ddots & \vdots \\ 
 a_{n, 1} & a_{n, 2} & \cdots & a_{n, n} 
 \end{matrix} \right), 
\end{equation}
 the Leibniz identity for determinants is as below: 
\begin{equation}\label{Leibniz}
 \text{det}(A) = \sum_{\sigma \in S_{n}} \left( \text{sgn}(\sigma) \prod_{i=1}^{n} a_{i, \sigma_{i}} \right), 
\end{equation}
 letting $S_{n}$ denotes the group of all permutations of $\{ 1, 2, \ldots, n \}$. 
 The \emph{permanent} of \eqref{generalnxn} is defined by replacing the sign function in 
 \eqref{Leibniz} as below: 
\begin{equation}\label{permdefinition}
 \text{perm}(A) = \sum_{\sigma \in S_{n}} \prod_{i = 1}^{n} a_{i, \sigma_{i}}. 
\end{equation}
 The matrix functions in \eqref{Leibniz} and \eqref{permdefinition} 
 are special cases of the immanant function defined in \cite{LittlewoodRichardson1934} and as below. 

 An integer partition is a finite tuple $\lambda$ of non-increasing natural numbers. If the sum of all of the entries of $\lambda$ is a natural number $n$, then 
 $\lambda$ is said to be a partition of $n$, and this is denoted as $\lambda \vdash n$. For $\lambda \vdash n$, we may let $\chi_{S_n}^{\lambda}$ 
 be the irreducible character that is of the symmetric group $S_n$ and that corresponds to $\lambda$. The \emph{immanant} $\text{Imm}^{\lambda}$ of 
 \eqref{generalnxn} may be defined so that: 
\begin{equation}\label{generalImm}
 \text{Imm}^{\lambda}(A) = 
 \sum_{\sigma \in S_{n}} \chi^{\lambda}_{S_{n}}(\sigma) \prod_{i = 1}^{n} a_{i, \sigma_{i}}. 
\end{equation}
 We find that the $\lambda = (1^{n})$ case of \eqref{generalImm} agrees with \eqref{Leibniz} and the $\lambda = (n)$ case of \eqref{generalImm} 
 agrees with \eqref{permdefinition}. The purpose of this article is to generalize \eqref{Leibniz}, \eqref{permdefinition}, and \eqref{generalImm} using 
 partition algebra characters, as opposed to symmetric group characters. 

 Immanants are of interest within many different areas of advanced linear algebra; see 
 \cite{CoelhoDuffner2001,DuffnerGutermanSpiridonov2021,Gamas2000,GroneMerris1987,Hartmann1985,Heyfron1991,James1994,Kostant1995,Pate1999,Tabata2016}, 
 for example, and many related references. 
 The definition of immanants in terms of the irreducible characters of the symmetric group naturally lends itself
 to applications related to many different areas of algebraic combinatorics; 
 for example, see \cite{ClearmanSheltonSkandera2011,GouldenJackson1992,Greene1992,Haiman1993,Konvalinka2010,StanleyStembridge1993}
 and many similar references. 
 The foregoing considerations reflect the interdisciplinary nature about immanants and motivate our generalization of immanants. 

 Let $V$ denote an $r$-dimensional vector space. Let the general linear group $\text{GL}_{r}(\mathbb{C})$ act on the tensor space $V^{\otimes n}$ 
 diagonally. By taking $S_{r}$ as a subgroup of $\text{GL}_{r}(\mathbb{C})$ and restricting the action of $\text{GL}_{r}(\mathbb{C})$ to permutation 
 matrices, partition algebras may be defined via the centralizer algebra 
\begin{equation}\label{maincentralizer}
 P_{n}(r) \cong \text{End}_{S_{r}}\left(V^{\otimes n}\right), 
\end{equation}
 and the study of partition algebras had arisen within the field of statistical mechanics via 
 the centralizer algebra in \eqref{maincentralizer}, with reference to the work of Jones
 \cite{Jones1994} and Martin \cite{Martin2000,Martin1991,Martin1994,Martin1996}. 
 This again speaks to the interdisciplinary interest surrounding our generalization of immanants 
 via partition algebra characters. 

\subsection{Preliminaries}\label{Preliminaries}
 Our notation concerning partition algebras is mainly borrowed from Halverson's article on the character theory for partition algebras \cite{Halverson2001}. 
 For the sake of breivty, we assume familiarity with partition diagrams and the multiplication of partition diagrams, referring to \cite{Halverson2001} for 
 details. We let $P_{n}(r)$ denote the $\mathbb{C}$-span of all order-$n$ partition diagrams, and we endow this space with the multiplicative 
 operation specified in \cite{Halverson2001}. Structures of this form are referred to as \emph{partition algebras}. 
 We find that the symmetric group algebra 
 of order $n$ spanned by $\mathbb{C}$ is naturally a subalgebra, by taking 
 the span of partition diagrams of order $n$ with $n$ components with exactly one vertex in the upper row 
 and exactly one vertex in the lower row. 

 For integer partitions $\lambda$ and $\mu$, if $\mu_{i} \leq \lambda_{i}$ for all $i$, then $ \lambda / \mu$ denotes the skew shape obtained by 
 removing $\mu$ from $\lambda$. We adopt the convention whereby the upper nodes of a partition diagram of order 
 $n$ are labeled with $1$, $2$, $\ldots$, $n$ and whereby the lower nodes of this diagram are labeled 
 with $1'$, $2'$, $\ldots$, $n'$. We then let $P_{n-1}(x)$ be embedded in $P_{n}(x)$ by adding vertices labeled with $n$ and $n'$ and by letting 
 these vertices be adjacent. From the branching rules subject to the restriction from $P_{n}(r)$ to $P_{n-1}(r)$, and with the use of double centralizer 
 theory via \eqref{maincentralizer}, it can be shown that the irreducible representations of $P_{n}(r)$ are in bijection with 
\begin{equation}\label{irreducibleindexed}
 \widehat{P_{n}(r)} = \{ \lambda \vdash r \ : \ |\lambda^{\ast}| \leq n \}, 
\end{equation}
 where $\lambda^{\ast} = \lambda / (\lambda_{1})$. 

 We let $M^{\lambda}$ denote the irreducible representation of $P_{n}(r)$ indexed by $\lambda \in \widehat{P_{n}(r)}$. Following 
 \cite{Halverson2001}, we establish a bijection between \eqref{irreducibleindexed} and the set $ \widehat{P_{n}} $ consisting of all expressions of the form 
 $\lambda^{\ast}$ in \eqref{irreducibleindexed}, i.e., by mapping $\lambda$ to $\lambda^{\ast}$ and, conversely, by adding a row to $\lambda^{\ast}$ 
 appropriately. For $\mu \in \widehat{P_{n}}$, we may let $\chi_{P_{n}(x)}^{\lambda}$ denote the irreducible character of $P_{n}(x)$ corresponding 
 to $M^{\lambda}$. 

 A basic result in the representation theory of groups is given by how characters are constant on conjugacy classes. Halverson \cite{Halverson2001} 
 introduced a procedure for collecting partition diagrams so as to form analogues of conjugacy classes, referring to \cite{Halverson2001} for details. For a 
 diagram $d$, we let $d_{\mu}$ denote the conjugacy class representative such that $\chi(d) = \chi(d_{\mu})$ for a given partition algebra character $\chi$. 
 
\section{A generalization of immanants}
 For a permutation $p$ of order $n$ that we denote as a function 
\begin{equation}\label{pasfunction}
 p\colon \{ 1, 2, \ldots, n \} \to \{ 1, 2, \ldots, n \}, 
\end{equation}
 we identify this permutation with the partition diagram corresponding to $\{ \{ 1, (p(1))' \}$, $ \{ 2, (p(2))' \}$, $ \ldots$, 
 $ \{ n, (p(n))' \} \}$. We then consider this partition diagram 
 as being associated with the product 
\begin{equation}\label{associatedproduct}
 \prod_{i=1}^{n} a_{i, p(i)}, 
\end{equation}
 for the matrix $A$ in \eqref{generalnxn}, 
 and with regard to the summand in \eqref{generalImm}. 
 So, this raises the question as to what would be appropriate as an analogue of the product in \eqref{associatedproduct}, 
 for an \emph{arbitrary} partition diagram. This leads us toward the following. 

\begin{definition}\label{20230999828818177217707A7M1A}
 For the $n \times n$ matrix in \eqref{generalnxn}, we let the product $ \prod_{d} a_{i, j}$ or $ \prod_{d} A$ be defined in the following manner. 
 If $d$ is of propagation number $0$, then we let the expression $\prod_{d} a_{i, j}$ vanish. If $d$ is of a positive propagation number, let $B$ be a 
 component of $d$ that is propagating. We then form the product of all expressions of the form $a_{i, j}$ such that $i$ is in $B$ and $j'$ is in $B$. Let $ 
 \Pi_{B}$ denote this product we have defined using the component $B$. We then define $\prod_{d} a_{i,j}$ as the product of all expressions of the 
 form $\Pi_{B}$ for all propagating components of $d$. 
\end{definition}

\begin{example}
 For the partition diagram 
 $$ d = \begin{tikzpicture}[scale = 0.5,thick, baseline={(0,-1ex/2)}] 
\tikzstyle{vertex} = [shape = circle, minimum size = 7pt, inner sep = 1pt] 
\node[vertex] (G--5) at (6.0, -1) [shape = circle, draw] {}; 
\node[vertex] (G--4) at (4.5, -1) [shape = circle, draw] {}; 
\node[vertex] (G-3) at (3.0, 1) [shape = circle, draw] {}; 
\node[vertex] (G-5) at (6.0, 1) [shape = circle, draw] {}; 
\node[vertex] (G--3) at (3.0, -1) [shape = circle, draw] {}; 
\node[vertex] (G--2) at (1.5, -1) [shape = circle, draw] {}; 
\node[vertex] (G--1) at (0.0, -1) [shape = circle, draw] {}; 
\node[vertex] (G-2) at (1.5, 1) [shape = circle, draw] {}; 
\node[vertex] (G-1) at (0.0, 1) [shape = circle, draw] {}; 
\node[vertex] (G-4) at (4.5, 1) [shape = circle, draw] {}; 
\draw[] (G-3) .. controls +(0.6, -0.6) and +(-0.6, -0.6) .. (G-5); 
\draw[] (G-5) .. controls +(0, -1) and +(0, 1) .. (G--5); 
\draw[] (G--5) .. controls +(-0.5, 0.5) and +(0.5, 0.5) .. (G--4); 
\draw[] (G--4) .. controls +(-0.75, 1) and +(0.75, -1) .. (G-3); 
\draw[] (G-2) .. controls +(0.75, -1) and +(-0.75, 1) .. (G--3); 
\draw[] (G--3) .. controls +(-0.5, 0.5) and +(0.5, 0.5) .. (G--2); 
\draw[] (G--2) .. controls +(-0.5, 0.5) and +(0.5, 0.5) .. (G--1); 
\draw[] (G--1) .. controls +(0.75, 1) and +(-0.75, -1) .. (G-2); 
\end{tikzpicture} $$
 and for the $5 \times 5$ case of \eqref{generalnxn}, we find that $$ \prod_{d} a_{i, j} = \prod_{d} A = \left( a_{2,1} a_{2,2} a_{2, 3} 
 \right) \left( a_{3, 4} a_{3, 5} a_{5, 4} a_{5, 5} \right). $$ 
\end{example}

 Definition \ref{20230999828818177217707A7M1A} puts us in a position to offer a full definition
 for the concept of the recombinant of a matrix, as below. 

\begin{definition}\label{definitionrecombinant}
 We define the \emph{recombinant} of the square matrix in \eqref{generalnxn} so that 
\begin{equation}\label{202308qqqq22241000018A7M71A}
 \text{Rec}^{\lambda}(A) = \sum_{d \in P_{n}(r)} \chi_{P_{n}(r)}^{\lambda}(d) \prod_{d} a_{i, j}. 
\end{equation}
\end{definition}

 For example, an explicit evaluation for the recombinant, for non-propagating submodules of partition algebras, of any $2 \times 2$ matrix is given 
 Section \ref{subsectionexplicit} 

 Since our article is based on generalizing immanants using partition algebra characters, it would be appropriate to prove, as below, that Definition  
 \ref{definitionrecombinant} does indeed generalize \eqref{generalImm}.  In our below proof, we are to make use of the property described by Halverson 
 \cite{Halverson2001}  whereby character tables for partition algebras satisfy a recursion of the form 
\begin{equation}\label{Xirecursion}
 \Xi_{P_{n}(x)} = \left[ \begin{matrix} 
  x \Xi_{P_{n-1}(x)} & \vdots & \ast \\ 
 \cdots & \null & \cdots \\ 
 0 & \vdots & \Xi_{S_{n}} 
 \end{matrix} \right], 
\end{equation}
 where $ \Xi_{S_{n}} $ denotes the character table of $S_{n}$. 

 By direct analogy with how Young tableaux are formed from paths in Young's lattice, \emph{vacillating tableaux} are formed from paths in the Bratteli 
 diagram $\hat{A}$ described in \cite{HalversonRam2005}. For the case whereby such a path ends on an integer partition of order $n$ at level $n$ in $ 
 \hat{A}$, this corresponds to an embedding of an irreducible representation of $\mathbb{C}S_n$ \cite{HalversonRam2005}. For a vacillating tableau 
 $T$ of this form, Theorem \ref{maintheorem} below gives us that the recombinant corresponding to the partition algebra representation $\rho$ 
 corresponding to $T$ is the same as the immanant corresponding to the symmetric group algebra representation corresponding to $\rho$. 

\begin{theorem}\label{maintheorem}
 For an $n \times n$ matrix $A$, 
 if $|\lambda^{\ast}| = n$, then $\text{\emph{Rec}}^{\lambda}(A) = \text{\emph{Imm}}^{\lambda^{\ast}}(A)$. 
\end{theorem}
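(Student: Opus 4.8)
The plan is to break the defining sum in \eqref{202308qqqq22241000018A7M71A} into three parts according to the propagation number of the partition diagram $d$, and to check that only the permutation diagrams survive. Throughout, $d$ ranges over the standard basis of order-$n$ partition diagrams spanning $P_{n}(r)$, and I write $\mathrm{pn}(d)$ for the propagation number of $d$. First I would dispose of the diagrams with $\mathrm{pn}(d) = 0$: Definition \ref{20230999828818177217707A7M1A} makes $\prod_{d} a_{i,j}$ vanish for every such $d$, so these summands contribute nothing, regardless of the character value.

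Next I would handle the diagrams with $1 \le \mathrm{pn}(d) \le n-1$, for which $\prod_{d} a_{i,j}$ is generically nonzero, so the vanishing must come from the character factor. The hypothesis $|\lambda^{\ast}| = n$ means that $\lambda$ indexes one of the bottom rows of the character table $\Xi_{P_{n}(x)}$ in the recursion \eqref{Xirecursion} (the rows forming the block that contains $\Xi_{S_{n}}$), while the columns sitting to the left of the $\Xi_{S_{n}}$ block are exactly the conjugacy classes of diagrams of propagation number strictly less than $n$. Since \eqref{Xirecursion} asserts that the lower-left block is $0$, and since the propagation number is constant on Halverson's conjugacy classes (so that the class representative $d_{\mu}$ of a diagram with $\mathrm{pn}(d) \le n-1$ again satisfies $\mathrm{pn}(d_{\mu}) \le n-1$), it follows that $\chi_{P_{n}(r)}^{\lambda}(d) = \chi_{P_{n}(r)}^{\lambda}(d_{\mu}) = 0$ for all such $d$.

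It remains to evaluate the contribution of the diagrams with $\mathrm{pn}(d) = n$. Such a $d$ has $n$ propagating components competing for $n$ upper and $n$ lower vertices, so a pigeonhole argument forces each propagating component to consist of exactly one upper vertex and one lower vertex and forbids any non-propagating component; hence $d$ is the diagram $d_{\sigma}$ of a unique $\sigma \in S_{n}$, and conversely every permutation diagram has propagation number $n$. For $d = d_{\sigma}$, applying Definition \ref{20230999828818177217707A7M1A} to the components $\{ i, (\sigma(i))' \}$ gives $\prod_{d_{\sigma}} a_{i,j} = \prod_{i=1}^{n} a_{i, \sigma(i)}$, which is precisely the summand appearing in \eqref{generalImm}. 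On the character side, the lower-right block of \eqref{Xirecursion} is literally $\Xi_{S_{n}}$, so under the bijection $\lambda \leftrightarrow \lambda^{\ast}$ recalled in Section \ref{Preliminaries} we have $\chi_{P_{n}(r)}^{\lambda}(d_{\sigma}) = \chi_{S_{n}}^{\lambda^{\ast}}(\sigma)$. Adding the three contributions then yields
\begin{equation*}
 \text{Rec}^{\lambda}(A) = \sum_{\sigma \in S_{n}} \chi_{S_{n}}^{\lambda^{\ast}}(\sigma) \prod_{i=1}^{n} a_{i, \sigma(i)} = \text{Imm}^{\lambda^{\ast}}(A),
\end{equation*}
as desired.

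I expect the crux to be the middle step: justifying rigorously that, when $|\lambda^{\ast}| = n$, the irreducible character $\chi_{P_{n}(r)}^{\lambda}$ annihilates every diagram whose propagation number lies strictly between $0$ and $n$. Carrying this out carefully means matching the combinatorics of \eqref{Xirecursion} (which irreducibles label its bottom rows, and which conjugacy classes label the columns of its zero block) with the propagation-number stratification of partition diagrams, and invoking the invariance of the propagation number under Halverson's conjugacy relation so that passing from $d$ to its class representative $d_{\mu}$ does no harm. The two remaining ingredients — the vanishing of $\prod_{d} a_{i,j}$ in propagation number $0$ and the identification of the propagation-number-$n$ diagrams with the summands of the immanant — are then immediate from Definition \ref{20230999828818177217707A7M1A} and Definition \ref{definitionrecombinant}.
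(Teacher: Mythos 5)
Your proposal is correct and follows essentially the same route as the paper's own proof: the low-propagation diagrams are killed by the vanishing of $\chi_{P_{n}(r)}^{\lambda}$ on conjugacy classes $d_{\mu}$ with $|\mu| < |\lambda^{\ast}| = n$ (the zero block of \eqref{Xirecursion}, stated in the paper as \eqref{reducedcases} via Halverson's Corollary 4.2.3), the propagation-number-$n$ diagrams are exactly the permutation diagrams, and on these the character reduces to $\chi_{S_{n}}^{\lambda^{\ast}}$ while $\prod_{d} a_{i,j}$ reduces to $\prod_{i} a_{i,\sigma(i)}$. The only cosmetic differences are that you split off the propagation-number-$0$ case (which the character vanishing already covers) and that you make explicit the point, implicit in the paper, that passing from $d$ to its class representative $d_{\mu}$ cannot raise $|\mu|$ above $\mathrm{pn}(d)$.
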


\begin{proof} 
 First, let us write $\lambda \vdash r$ and $|\lambda^{\ast}| \leq n$, and let us suppose that $\mu$ is a weak composition such that $0 \leq |\mu| \leq n$. 
 By Corollary 4.2.3 from \cite{Halverson2001}, we have that 
\begin{equation}\label{reducedcases}
 \chi_{P_{n}(r)}^{\lambda}\left( d_{\mu} \right) = 0 \ \ \ \text{if $|\mu| < |\lambda^{\ast}|$,} 
\end{equation}
 and that the equality $|\mu| = |\lambda^{\ast}| = n$ implies that 
\begin{equation}\label{characterequivalence}
 \chi_{P_{n}(r)}^{\lambda}\left( d_{\mu} \right) = \chi_{S_{n}} ^{\lambda^{\ast}}\left( \gamma_{\mu} \right). 
\end{equation}
 For a permuting diagram $d$, Halverson's procedure for conjugacy class analogues \cite{Halverson2001} gives us that $\gamma_{\mu}$ is the 
 cycle type for the permutation corresponding to $d$, with $d = d_{\mu}$ written as a product of disjoint, cyclic permutation diagrams. So, for an $n 
 \times n$ matrix $A$ and for $|\lambda^{\ast}| = n$, we find, from \eqref{reducedcases}, that $\chi_{P_{n}(r)}^{\lambda}(d)$ vanishes for all 
 non-propagating partition diagrams $d$, as in the lower left block of the character table in \eqref{Xirecursion}, so that we may rewrite 
 \eqref{202308qqqq22241000018A7M71A} so that 
\begin{equation}\label{Recpermutation}
 \text{Rec}^{\lambda}(A) = \sum_{\text{prop}(d) = n} \chi_{P_{n}(r)}^{\lambda}(d) \prod_{d} a_{i, j}, 
\end{equation}
 and where the character $ \chi_{P_{n}(r)}^{\lambda}(d) $ reduces, in the manner specified in \eqref{characterequivalence}, to the corresponding 
 character of $S_{n}$ evaluated at the permutation corresponding to the permuting diagram $d$. 
 By Definition \ref{20230999828818177217707A7M1A}, 
 the product $\prod_{d} a_{i, j}$ in \eqref{Recpermutation} is equal to 
 $a_{1, d(1)} a_{2, d(2)} \cdots a_{n, d(n)}$, writing the permuting diagram $d$ as a permutation 
 as in \eqref{pasfunction}. 
\end{proof}

\begin{remark}
 Let us write $E_{\ell}$ to denote the partition diagram corresponding to $$\frac{1}{r} \{ \{ 1, 1' \}, \{ 2, 2' \}, 
 \ldots, \{ \ell - 1, (\ell - 1)' \}, 
 \{ \ell, \ell + 1, \ldots, n \}, \{ \ell', (\ell + 1)', \ldots, n' \} \}.$$
 We find that $P_{n}(r) E_{\ell} P_{n}(r)$ 
 is a two-sided ideal and consists of all linear combinations of partition diagrams with propagation number strictly
 less than $\ell$. Fundamental results in the representation theory 
 of partition algebras are such that
\begin{equation}\label{symmetricquotient}
 \mathbb{C}S_n \cong P_{n}(r) / \left( P_{n}(r) E_{n} P_{n}(r) \right) 
 \end{equation} 
 and such that any irreducible representation of $P_{n}(r)$ is either an irreducible 
 representation of $E_{n} P_{n}(r) E_{n}$ or an irreducible representation of 
 the right-hand side of \eqref{symmetricquotient}; 
 see \cite[\S4]{Marcott2015}, for example, and references therein. 
 These properties can be used to formulate an alternative proof of Theorem \ref{maintheorem}. 
\end{remark}

 Our generalization of immanants, as above, is fundamentally different compared to previously considered generalizations or variants of the immanant 
 function. Notably, Definition \ref{definitionrecombinant} is separate relative to how \emph{$f$-immanants} are defined. Following 
 \cite{RhoadesSkandera2006}, an $f$-immanant, by analogy with \eqref{generalImm}, is of the form 
\begin{equation}\label{Immf}
 \text{Imm}^{f}(A) = \sum_{\sigma \in S_{n}} f(\sigma) \prod_{i=1}^{n} a_{i, \sigma_{i}} 
\end{equation}
 for an arbitrary function $f\colon S_{n} \to \mathbb{C}$. A notable instance of an $f$-immanant that is not of the form indicated in \eqref{generalImm} 
 is the Kazhdan--Lusztig immanant, where the $f$-function in \eqref{Immf} is given by Kazhdan--Lusztig polynomials associated to certain permutations. 
 In contrast to generalizations of immanants of the form shown in \eqref{Immf}, our lifting of the 
 definition in \eqref{generalImm} is based on a sum 
 indexed by the diagram basis of $P_{n}(r)$, in contrast to the index set for the sum in \eqref{Immf}.
 In contrast to immanants of $n \times n$ matrices being in correspondence with integer partitions of 
 $n$, and in contrast to $f$-immanants of $n \times n$ matrices being in correspondence with class functions on $S_{n}$, we have that recombinants of 
 $n \times n$ matrices are in correspondence with the family of integer partitions in \eqref{irreducibleindexed}. 

\subsection{An explicit evaluation}\label{subsectionexplicit}
 We find it convenient to denote partition algebra characters by writing $\chi^{\lambda^{\ast}}(d)$ in place of $\chi_{P_{n}(r)}^{\lambda}(d)$. 
 Correspondingly, we may denote the recombinant associated with the character $\chi^{\lambda^{\ast}}$ as $\text{Rec}^{\lambda^{\ast}}$. As 
 below, we are to let diagram basis elements be ordered according to the {\tt SageMath} convention for ordering such basis elements. According to this 
 convention, let the diagram basis of the order-$2$ partition diagram be ordered in the manner indicated in Table \ref{TablePCCE}, 
 letting partition diagrams be denoted with set partitions. 

\begin{table}[t]
\centering

 \begin{tabular}{ | c | c | c | }
 \hline
 $i$ & $d_{i}$ & $\chi^{\varnothing}(d_{i})$ \\ \hline 
 1 & $ \{\{2'$, $1'$, $1$, $ 2\}\}$ & $1$ \\ \hline 
 2 & $ \{\{2'$, $ 1$, $ 2\}$, $ \{1'\}\}$ & $1$ \\ \hline 
 3 & $ \{\{2'\}$, $ \{1'$, $1$, $ 2\}\}$ & $1$ \\ \hline 
 4 & $ \{\{2'$, $ 1'\}$, $ \{1$, $ 2\}\}$ & $r$ \\ \hline 
 5 & $ \{\{2'\}$, $ \{1'\}$, $ \{1$, $ 2\}\}$ & $r$ \\ \hline 
 6 & $ \{\{2'$, $ 1'$, $ 1\}$, $ \{2\}\}$ & $1$ \\ \hline 
 7 & $ \{\{2'$, $ 1\}$, $ \{1'$, $ 2\}\}$ & $2$ \\ \hline 
 8 & $ \{\{2'$, $ 1\}$, $ \{1'\}$, $ \{2\}\}$ & $r$ \\ \hline 
 9 & $ \{\{2'$, $2\}$, $ \{1'$, $ 1\}\}$ & $2$ \\ \hline 
 10 & $ \{\{2'$, $ 1'$, $ 2\}$, $ \{1\}\}$ & $1$ \\ \hline 
 11 & $ \{\{2'$, $ 2\}$, $ \{1'\}$, $ \{1\}\}$ & $r$ \\ \hline 
 12 & $ \{\{2'\}$, $ \{1'$, $ 1\}$, $ \{2\}\}$ & $r$ \\ \hline 
 13 & $ \{\{2'\}$, $ \{1'$, $ 2\}$, $ \{1\}\}$ & $r$ \\ \hline 
 14 & $ \{\{2'$, $ 1'\}$, $ \{1\}$, $ \{2\}\}$ & $r$ \\ \hline 
 15 & $ \{\{2'\}$, $ \{1'\}$, $ \{1\}$, $ \{2\}\}$ & $r^2$ \\ \hline 
 \end{tabular}

 \ 

\caption{The {\tt SageMath} ordering for partition diagrams of order $2$, along with 
 the irreducible characters corresponding to non-propagating representations. }\label{TablePCCE}
\end{table}

\begin{example}\label{exampleRecnull}
 According to Definition \ref{definitionrecombinant}, by writing 
\begin{align*}
 & \text{Rec}^{\varnothing}\left( \begin{matrix} 
 a_{1, 1} & a_{1, 2} \\ 
 a_{2, 1} & a_{2, 2} 
 \end{matrix}\right) = \sum_{d \in P_{2}(r)} \chi^{\varnothing}(d) \prod_{d} a_{i, j} \\ 
 & = \chi^{\varnothing}(d_{1}) \prod_{d_{1}} a_{i, j} + 
 \chi^{\varnothing}(d_{2}) \prod_{d_{2}} a_{i, j} + \cdots + 
 \chi^{\varnothing}(d_{15}) \prod_{d_{15}} a_{i, j}, 
\end{align*}
 we may evaluate the recombinant $\text{Rec}^{\varnothing}$ according to the character values shown in Table 
 \ref{TablePCCE}, so as to obtain that 
\begin{align*}
 & \text{Rec}^{\varnothing}\left( \begin{matrix} 
 a_{1, 1} & a_{1, 2} \\ 
 a_{2, 1} & a_{2, 2} 
 \end{matrix}\right) = \\ 
 & a_{1, 1} a_{1, 2} a_{2, 1} a_{2, 2} + \\ 
 & a_{1, 1} a_{2, 1} + a_{1, 1} a_{1, 2} + a_{1, 2} a_{2, 2} + a_{2, 1}a_{2, 2} + \\ 
 & 2 \left( a_{1, 1}a_{2, 2} + a_{1, 2} a_{2, 1} \right) + \\ 
 & r \left( a_{1, 1} + a_{1, 2} + a_{2, 1} + a_{2, 2} \right). 
\end{align*}
 For example, we may verify the above evaluation 
 by computing the traces associated with the linear transforms 
 given by the action of left-multiplication by diagram basis elements
 on the irreducible $P_{2}(r)$-module
 $\mathscr{L}\{ d_{4}, d_{14} \}$. 
\end{example}

 We may obtain a similar evaluation, relative to Example \ref{exampleRecnull}, for the recombinant that corresponds to the 3-dimensional representations 
 of $P_{2}(r)$. 

\section{Conclusion}
 We conclude with some areas for future research concerning the matrix function introduced in this paper. 

 A fundamental formula in algebraic combinatorics is Frobenius' formula for irreducible characters of the symmetric group, which, following 
 \cite{Halverson2001}, was later shown by Schur to be a consequence of what is now know as \emph{Schur--Weyl duality} between symmetric groups 
 and general linear groups. The \emph{irreducible character basis} introduced in \cite{OrellanaZabrocki2021} may be defined via a lifting of 
 the consequence 
\begin{equation}\label{consequenceSW}
 p_{\mu} = \sum_{\lambda \vdash n} \chi_{S_{n}}^{\lambda}(\mu) s_{\lambda} 
\end{equation} 
 of Schur--Weyl duality, with partition algebra characters used in place of symmetric group characters in an analogue of \eqref{consequenceSW}. 
 The {\tt SageMath} implementation of the $\tilde{s}$-basis from \cite{OrellanaZabrocki2021} provides a useful way of computing partition algebra 
 characters, which could be used to obtain a useful way of computing recombinants. We encourage applications of this. 

 Temperley--Lieb algebras form an important family of subalgebras of partition algebras. The \emph{Temperley--Lieb immanants} introduced by Rhoades 
 and Skandera \cite{RhoadesSkandera2005} are $f$-immanants defined in a way related to Temperley--Lieb algebras, referring to 
 \cite{RhoadesSkandera2005} for details. It seems that past research influenced by \cite{RhoadesSkandera2005}, including relevant research on 
 immanants or immanant-type functions as in \cite{deGuiseSpivakKulpDhand2016,Pylyavskyy2010,RhoadesSkandera2006,RhoadesSkandera2010}, has not 
 involved any generalizations of immanants using partition algebra characters. It may be worthwhile to explore relationships among recombinants 
 and Temperley--Lieb immanants, or to explore generalizations or variants of recombinants related to the way Temperley--Lieb immanants are defined. 

 The concept of a \emph{twisted immanant} was introduced in \cite{Itoh2016} and was based on how the irreducible character $\chi^{\lambda}$, if 
 restricted to an alternating subgroup, splits as a sum of two irreducible characters, writing $\chi^{\lambda} = \chi^{\lambda_{+}} + \chi^{\lambda_{-}}$. 
 What would be an appropriate notion of a \emph{twisted recombinant}, and how could this be applied in a similar way, relative to \cite{Itoh2016}? 

 Immanants are often applied in the field of algebraic graph theory, via immanants of Laplacian matrices and the like. How could recombinants be 
 applied similarly? 

 Immanants of Toeplitz matrices are often studied due to recursive properties of such immanants. What is the recombinant of a given Toeplitz matrix? 

\subsection*{Acknowledgements} 
 The author was supported through a Killam Postdoctoral Fellowship from the Killam Trusts, and the author wants to thank Karl Dilcher for many useful 
 discussions. The author is thankful to Mike Zabrocki for useful comments concerning the irreducible character basis and for many useful discussions 
 concerning partition algebras.

 \

  Department of Mathematics and Statistics

  Dalhousie University

  Halifax, NS,  B3H 4R2 

  {\tt jmaxwellcampbell@gmail.com}


\begin{thebibliography}{9}

\bibitem{ClearmanSheltonSkandera2011}%qqqqqqqqqqqqqqq
 S.\ Clearman, B. Shelton, and M. Skandera, 
 \emph{Path tableaux and combinatorial interpretations of immanants for class functions on {$S_n$}}. 
 In 23rd {I}nternational {C}onference on {F}ormal {P}ower {S}eries 
 and {A}lgebraic {C}ombinatorics ({FPSAC} 2011), 
 Assoc. Discrete Math. Theor. Comput. Sci., Nancy, 
 2011, pp.\ 233--244. 

\bibitem{CoelhoDuffner2001}%qqqqqqqqqqqqqqqw
 M. P.\ Coelho and M. A. Duffner, 
 \emph{Subspaces where an immanant is convertible into its conjugate}. 
 Linear and Multilinear Algebra {\bf 48}(2001), no.\ 4, 383--408. 

\bibitem{deGuiseSpivakKulpDhand2016}%qqqqqqqqqqqqqqqw
 H.\ de Guise, D. Spivak, J. Kulp, and I. Dhand, 
 \emph{{$D$}-functions and immanants of unitary matrices and submatrices}. 
 J. Phys. A {\bf 49}(2016), no.\ 9, 09LT01, 12. 

\bibitem{DuffnerGutermanSpiridonov2021}%qqqqqqqqqqqqqqq
 M. A. Duffner, A. E. Guterman, and I. A. Spiridonov, 
 \emph{Converting immanants on skew-symmetric matrices}. 
 Linear Algebra Appl.\ {\bf 618}(2021), 76--96. 

\bibitem{Gamas2000}%qqqqqqqqqqqqqqqw
 C.\ Gamas, 
 \emph{Spherical functions and immanants}. 
 Linear and Multilinear Algebra {\bf 47}(2000), no.\ 2, 151--173. 

\bibitem{GouldenJackson1992}%qqqqqqqqqqqqqqqw
 I. P. Goulden and D. M. Jackson, 
 \emph{Immanants of combinatorial matrices}. 
 J. Algebra {\bf 148}(1992), no.\ 2, 305--324. 

\bibitem{Greene1992}%qqqqqqqqqqqqqqqw
 C.\ Greene, 
 \emph{Proof of a conjecture on immanants of the {J}acobi-{T}rudi matrix}. 
 Linear Algebra Appl.\ {\bf 171}(1992), 65--79. 

\bibitem{GroneMerris1987}%qqqqqqqqqqqqqqqw
 R.\ Grone and R.\ Merris, 
 \emph{A {H}adamard inequality for the second immanant}. 
 J. Algebra {\bf 111}(1987), no.\ 2, 343--346. 

\bibitem{Haiman1993}%qqqqqqqqqqqqqqqw
 M.\ Haiman, 
 \emph{Hecke algebra characters and immanant conjectures}. 
 J. Amer. Math. Soc.\ {\bf 6}(1993), no.\ 3, 569--595. 

\bibitem{Halverson2001}%qqqqqqqqqqqqqqqw
 T.\ Halverson, 
 \emph{Characters of the partition algebras}. 
 J. Algebra {\bf 238}(2001), no.\ 2, 502--533. 

\bibitem{HalversonRam2005}%qqqqqqqqqqqqqqqw
 T.\ Halverson and A.\ Ram, 
 \emph{Partition algebras}. 
 European J. Combin.\ {\bf 26}(2005), no.\ 6, 869--921. 

\bibitem{Hartmann1985}%qqqqqqqqqqqqqqqw
 W.\ Hartmann, 
 \emph{On the complexity of immanants}. 
 Linear and Multilinear Algebra {\bf 18}(1985), no.\ 2, 127--140. 

\bibitem{Heyfron1991}%qqqqqqqqqqqqqqqw
 P. Heyfron, 
 \emph{Some inequalities concerning immanants}. 
 Math. Proc. Cambridge Philos. Soc.\ {\bf 109}(1991), no.\ 1, 15--30. 
 
\bibitem{Itoh2016}%qqqqqqqqqqqqqqqw
 M.\ Itoh, 
 \emph{Twisted immanant and matrices with anticommuting entries}. 
 Linear Multilinear Algebra {\bf 64}(2016), no.\ 8, 1637--1653. 

\bibitem{James1994}%qqqqqqqqqqqqqqqw
 G.\ James, 
 \emph{Hecke algebras and immanants}. 
 Linear Algebra Appl.\ {\bf 197/198}(1994), 659--670. 

\bibitem{Jones1994}%qqqqqqqqqqqqqqq
 V. F. R. Jones, 
 \emph{The {P}otts model and the symmetric group}. 
 In Subfactors ({K}yuzeso, 1993), 
 World Sci. Publ., River Edge, NJ, 1994, pp.\ 259--267. 

\bibitem{Konvalinka2010}%qqqqqqqqqqqqqqqw
 M.\ Konvalinka, 
 \emph{On {G}oulden-{J}ackson's determinantal formula for the immanant}. 
 Ann. Comb.\ {\bf 13}(2010), no.\ 4, 511--518. 

\bibitem{Kostant1995}%qqqqqqqqqqqqqqqw
 B.\ Kostant, 
 \emph{Immanant inequalities and {$0$}-weight spaces}. 
 J. Amer. Math. Soc.\ {\bf 8}(1995), no.\ 1, 181--186. 

\bibitem{LittlewoodRichardson1934}%qqqqqqqqqqqqqqqw
 D. E.\ Littlewood and A. R.\ Richardson, 
 \emph{Group characters and algebra}. 
 Philos. Trans. R. Soc. Lond., Ser. A {\bf 233}(1934), 99--142. 

\bibitem{Marcott2015}%qqqqqqqqqqqqqqq
 C. Marcott, 
 \emph{Partition Algebras and Kronecker Coefficients}. 
 M.Math. Thesis, University of Waterloo, 2015. 

\bibitem{Martin2000}%qqqqqqqqqqqqqqq
 P. P.\ Martin, 
 \emph{The partition algebra and the {P}otts model transfer matrix spectrum in high dimensions}. 
 J. Phys. A {\bf 33}(2000), no.\ 19, 3669--3695. 

\bibitem{Martin1991}%qqqqqqqqqqqqqqq
 P.\ Martin, 
 \emph{Potts models and related problems in statistical mechanics}. 
 World Scientific Publishing Co., Inc., Teaneck, NJ, 1991. 

\bibitem{Martin1994}%qqqqqqqqqqqqqqq
 P. Martin, 
 \emph{Temperley-{L}ieb algebras for nonplanar statistical mechanics---the partition algebra construction}. 
 J. Knot Theory Ramifications {\bf 3}(1994), no.\ 1, 51--82. 

\bibitem{Martin1996}%qqqqqqqqqqqqqqq
 P. Martin, 
 \emph{The structure of the partition algebras}. 
 J. Algebra {\bf 183}(1996), no.\ 2, 319--358. 

\bibitem{OrellanaZabrocki2021}%qqqqqqqqqqqqqqq
 R. Orellana and M. Zabrocki, 
 \emph{Symmetric group characters as symmetric functions}. 
 Adv.\ Math.\ {\bf 390}(2021), Paper No. 107943, 34. 

\bibitem{Pate1999}%qqqqqqqqqqqqqqq
 T.\ H.\ Pate, 
 \emph{Tensor inequalities, {$\xi$}-functions and inequalities involving immanants}. 
 Linear Algebra Appl.\ {\bf 295}(1999), no.\ 1-3, 31--59. 
 
\bibitem{Pylyavskyy2010}%qqqqqqqqqqqqqqqw
 P. Pylyavskyy, 
 \emph{{$A_2$}-web immanants}. 
 Discrete Math.\ {\bf 310}(2010), no.\ 15-16, 2183--2197. 

\bibitem{RhoadesSkandera2006}%qqqqqqqqqqqqqqqw
 B.\ Rhoades and M. Skandera, 
 \emph{Kazhdan-{L}usztig immanants and products of matrix minors}. 
 J. Algebra {\bf 304}(2006), no.\ 2, 793--811. 

\bibitem{RhoadesSkandera2010}%qqqqqqqqqqqqqqqw
 B.\ Rhoades and M. Skandera, 
 \emph{Kazhdan-{L}usztig immanants and products of matrix minors. {II}}. 
 Linear Multilinear Algebra {\bf 58}(2010), no.\ 1-2, 137--150. 

\bibitem{RhoadesSkandera2005}%qqqqqqqqqqqqqqqw
 B.\ Rhoades and M. Skandera, 
 \emph{Temperley-{L}ieb immanants}. 
 Ann. Comb.\ {\bf 9}(2005), no.\ 4, 451--494. 

\bibitem{StanleyStembridge1993}%qqqqqqqqqqqqqqqw
 R.\ P. Stanley and J.\ R. Stembridge, 
 \emph{On immanants of {J}acobi-{T}rudi matrices and permutations with restricted position}. 
 J. Combin. Theory Ser. A {\bf 62}(1993), no.\ 2, 261--279. 

\bibitem{Tabata2016}%qqqqqqqqqqqqqqq
 R.\ Tabata, 
 \emph{Limiting behavior of immanants of certain correlation matrix}. 
 Linear Algebra Appl.\ {\bf 510}(2016), 230--245. 

\end{thebibliography}
\end{document}